\newcommand{\Lc}{\ensuremath{\mathcal{L}}}
\newcommand{\RR}{\ensuremath{\mathbb{R}}}
\newcommand{\Nc}{\ensuremath{\mathcal{N}}}
\newcommand{\NN}{\ensuremath{\mathbb{N}}}
\newtheorem{theorem}{Theorem}
\newtheorem{assumption}{Assumption}
\newtheorem{claim}{Claim}
\newcommand{\bu}{\mathbf{u}}
\newcommand{\bv}{\mathbf{v}}
\newcommand{\bw}{\mathbf{w}}
\newcommand{\bfu}{\mathbf{f}}
\newcommand{\blam}{\boldsymbol{\lambda}}
\begin{document}

\title[Article Title]{An Alternating Direction Method of Multipliers for Topology Optimization}


\author*[1]{\fnm{Harsh} \sur{Choudhary}}\email{choudhar@fel.cvut.cz}

\author[2]{\fnm{Sven} \sur{Leyffer}}\email{leyffer@anl.gov}

\author[2]{\fnm{Dominic} \sur{Yang}}\email{dominic.yang@anl.gov}

\affil*[1]{\orgdiv{Computer Science}, \orgname{Czech Technical University in Prague}, \orgaddress{\street{13 Charles Square}, \city{Prague}, \postcode{12000}, \state{Prague}, \country{Czechia}}}

\affil[2]{\orgdiv{Mathematics and Computing}, \orgname{Argonne National Lab}, \orgaddress{\street{9700 S Cass Ave}, \city{Lemont}, \postcode{60493}, \state{Illinois}, \country{USA}}}


\abstract{We consider a class of integer-constrained optimization problems governed by partial differential equation (PDE) constraints and regularized via total variation (TV) in the context of topology optimization. The presence of discrete design variables, nonsmooth regularization, and non-convex objective renders the problem computationally challenging. To address this, we adopt the alternating direction method of multipliers (ADMM) framework, which enables a decomposition of the original problem into simpler subproblems that can be solved efficiently. The augmented Lagrangian formulation ensures consistency across variable updates while facilitating convergence under appropriate conditions.}


\keywords{Mixed-Integer Nonlinear Optimization, PDE Constrained Optimization, Topology Optimization}



\maketitle

\section{Problem Statement}\label{s:prob_state}
Topology optimization (TO) is a method for producing optimal design patterns within a given domain, subject to specific constraints. TO finds applications in fields like engineering, material science, thermodynamics etc. In practice, TO problems are ill-posed due to rapidly oscillating material distributions or fine-scale structures that are not physically realizable\cite{clason2018total} so we usually add a TV regularizer term in our optimization objective.
The specific problem we are interested in is the design of an optimal heat sink where we have two materials of given conductivity and the total conductivity of the material is given by a linear combination of these. We seek an optimal distribution of a discrete design variable $w(x)$ where $x$ in some domain $\Omega$ so as to minimize the objective in \eqref{eq:original_prob}, which consists of two terms, a compliance and a total-variation regularizer. The constraints include the PDE, boundary condition, and a volume constraint that limits the volume of $w(x)$ in $\Omega$ to a volume fraction $V_{\max}$:
\begin{subequations}\label{eq:original_prob}
\begin{align}
\inf_{u, w} \quad  
&J(w) = \int_{\Omega} \mathcal{F} \cdot u(x, w(x)) \, dx 
+ \alpha \int_{\Omega} |\nabla w(x)| \, dx \tag{1.a} \label{eq:original_prob_a} \\
\text{s.t.} \quad
&\nabla \cdot (k(w(x)) \nabla u(x)) + \mathcal{F}(x) = 0, 
\quad \text{in } \Omega \tag{1.b} \label{eq:original_prob_b} \\
&-k(w(x)) \nabla u(x) \cdot n = 0, 
\quad \text{on } \partial \Omega_N \tag{1.c} \label{eq:original_prob_c} \\
&k(w(x)) = k_0 (1 - w(x)) + k_1 w(x) \tag{1.d} \label{eq:original_prob_d} \\
&u(x) = 0, 
\quad \text{on } \partial \Omega_D \tag{1.e} \label{eq:original_prob_e} \\
&\int_{\Omega} w(x) \, dx \leq V_{\max} \tag{1.f} \label{eq:original_prob_f} ; \quad w(x) \in \{0, 1\}, 
\quad \forall x \in \Omega,
\end{align}
\end{subequations}
where $\partial\Omega_D$ and $\partial\Omega_D$ denote Dirichlet and Neumann boundaries, respectively.

We discretize \eqref{eq:original_prob} to obtain a finite-dimensional optimization problem. We denote by $E$, the set of finite elements, and we use piecewise constant elements to discretize the control $w$ and let $w_e$ denote the value of $w(x)$ on element $e\in E$. Likewise, $u_e$ denotes the nodal values of $u$ on $e\in E$ and $\Nc(e), e\in E$ denotes the set of adjacent elements of $e$. 
To make the TV norm independent of the type of discretization, we introduce edge-weights, $s_{ee'}$, for all adjacent elements.
To distinguish the resulting finite-dimension problem from \eqref{eq:original_prob}, we introduce vectors $(\bu,\bw)$ to denote the discretized states, $u_e$ and controls, $w_e$. 
\begin{equation}\label{eq:finite_dim}
\begin{aligned} 
\underset{\bu,\, \bw}{\text{min}} \quad 
& J(\bu,\bw) = \sum_{e \in E} f_e^T u_e + \alpha \sum_{e \in E} \sum_{e' \in \mathcal{N}(e)} |w_e - w_{e'}| \cdot s_{ee'} \\
\text{s.t.} \quad 
& A(\bw)\bu + \bfu = 0, \quad \text{including B.C.} \\
& \sum_{e \in E} w_e \leq V_{\max} \cdot |E|, \qquad \bw \in \{0, 1\}^{|E|}
\end{aligned}
\end{equation}
where we have combined the PDE and boundary conditions, \eqref{eq:original_prob_b}-\eqref{eq:original_prob_e}, into $A(w)u + f = 0$. The discretized problem is a mixed integer nonlinear optimization problem and is particularly hard to solve because the feasible set is nonconvex, and it contains integer variables, hence we cannot directly use any gradient-based methods. Another challenge is the non-smoothness of the second term. Third is that the compliance term is nonconvex as a function of $w$.


\section{Solution Methodology}
To solve \eqref{eq:finite_dim}, we propose an alternating direction method of multipliers (ADMM). We first introduce an auxiliary variable $v \in [0, 1]$, a continuous relaxation of the discrete control $w$ and obtain an equivalent problem \eqref{eq:aug_prob}:
\begin{equation}\label{eq:aug_prob}
\begin{aligned}
\underset{\bu, \bw, \bv}{\text{min}}{} \quad & J(\bu,\bv,\bw) = \sum_{e \in E} f_e^T u_e + 
\alpha \sum_{e \in E} \sum_{e' \in \mathcal{N}(e)} |w_e - w_{e'}| \cdot s_{ee'} \\
\text{s.t.} \quad & A(\bv)\bu + \bfu = 0,  \\
& \bv - \bw = 0 \\
& \sum_{e \in E} w_e \leq V_{\max}\cdot|E| \ ; \quad \bw \in \{0, 1\}^{|E|}\\
& \sum_{e \in E} v_e \leq V_{\max}\cdot|E| \ ;\quad \bv \in [0, 1]^{|E|},
\end{aligned}
\end{equation}
where we also use the SIMP formulation of conductivity \cite{krishna2017topology}, because it promotes the integrality of $v$ : $k(v) = \delta + (1-\delta)v^p$.

The bold symbols in \eqref{eq:aug_prob} and subsequent equations represent vectors. 
Note that we have also duplicated the volume constraint. This problem decouples into two independent subproblems if not for the copy constraint $\bv = \bw$. To remove the copy constraint, we dualize the constraint and construct the augmented Lagrangian.
\begin{equation}\label{eq:aug_lagr}
\begin{aligned} 
    \Lc_{\rho}(\bu, \bv, \bw, \blam) = \bfu^T \bu
 + \alpha \sum\limits_{e \in E} \sum\limits_{e' \in \Nc(e)} |w_e - w_{e'}| \cdot s_{ee'}
 + \blam(\bw - \bv) + \frac{\rho}{2} \| \bw - \bv\|^2_2
\end{aligned}
\end{equation}

ADMM alternates between two subproblems: a continuous one in $\bv$ and a discrete one in $\bw$. The first subproblem \eqref{eq:subprob1} is a nonlinear (discretized) PDE-constrained optimization problem for the continuous control $\bv$. 
\begin{equation}\label{eq:subprob1}
\texttt{Cont}(\bv; \overline{\bw}, \overline{\blam}) \;\triangleq\;
\left\{
\begin{aligned}
\underset{\textbf{u, v}}{\text{min}}{} \quad & \Lc_{\rho}(\bu, \bv; \overline{\bw}, \overline{\blam}) := \bfu^T\bu + \overline{\blam}(\overline{\bw}-\bv)+ \frac{\rho}{2}\|\overline{\bw} - \bv\|^2_2\\
\text{s.t.} \quad & A(\bv)\bu + \bfu = 0, \quad  \\
&\sum_{e \in E} v_e \leq V_{\max} \cdot |E| ; \quad \bv \in [0, 1]^{|E|}
\end{aligned}
\right.
\end{equation}
In practice, we solve \eqref{eq:subprob1} by a reduced-space approach that eliminates the discretized states, $\bu$ using the controls $\bv$ in the discretized PDE. We solve this problem using some gradient-based constraint optimization method like IPOPT \cite{wachter2006implementation}. 

The second subproblem \eqref{eq:subproblem2} is interesting in the sense that we have a TV regularizer along with the quadratic penalty term and a budget constraint. This resembles a class of problems presented in \cite{yang2025specialized}, which finds applications in image denoising, topology optimization, etc. These types of problems can be solved using an augmentation scheme where a solution is incremented or decremented on connected subregions. We follow the discussion in \cite{yang2025specialized} to randomize these moves and attain an approximating heuristic algorithm for this subproblem.

\begin{equation}\label{eq:subproblem2}
\texttt{Disc}(\mathbf{w}, \overline{\mathbf{v}}, \overline{\blam}) \;\triangleq\;
\left\{
\begin{aligned}
\underset{\mathbf{w}}{\text{min}}\quad &\mathcal{L}_{\rho}(\mathbf{w}; \overline{\mathbf{v}}, \overline{\blam}) := 
\alpha \sum\limits_{e \in E} \sum\limits_{e' \in \Nc(e)} |w_e - w_{e'}| \cdot s_{ee'}
 + \blam(\bw - \overline{\bv}) + \frac{\rho}{2} \|
 \mathbf{w} - \overline{\mathbf{v}}\|_2^2 \\
\text{s.t.} \quad & \sum_{e \in E} w_e \leq V_{\max} \cdot |E| ;  \quad \bw \in \{0,1\}^{|E|}
\end{aligned}
\right.
\end{equation}
where $s_{ee'} \in \{1, \sqrt{2}\}$ are the edge weights. We can now define our ADMM algorithm.

\begin{algorithm}[h!]
  \caption{ADMM for Topology Optimization\label{alg:admm_TO}}
  \KwInit{$\bv^{(0)}, \bw^{(0)}, \blam^{(0)}, j=0$, set $\tau_0 = \max(1, \|\bw^{(0)} - \bv^{(0)}\|^2_2)\,\gamma$} 
  \While{$\|\bw^{(j)} - \bv^{(j)}\|_2^2 > \delta$}{
      $\bv^{*} \leftarrow \arg\min_\bv \texttt{Cont}(\mathbf{v}; {\mathbf{w}^{(j)}}, {\blam^{(j)}})$ \tcp*[r]{cont. subprob.}
      
      $\bw^{*} \leftarrow \arg\min_\bw \texttt{Disc}(\mathbf{w}; {\mathbf{v}^{(j+1)}},\blam^{(j)})$ \tcp*[r]{discr. subprob.}
      
    \eIf{$\|\bw^{(j+1)} - \bv^{(j+1)}\|^2_2 \le \beta\,\tau^{(j)}$}{
      $\tau^{(j+1)} \gets (1-\zeta)\tau^{(j)} + \zeta (\|\bw^{(j+1)} - \bv^{(j+1)}\|^2_2)$\; $\bv^{(j+1)}\gets\bv^{*};\quad \bw^{(j+1)}\gets\bw^{*}$\tcp*[r]{accept step}
      $\blam^{(j+1)} \gets \blam^{(j)} + \rho^{(j)}(\bw^{(j+1)} - \bv^{(j+1)})$ \tcp*[r]{update multiplier}
    }{
      $\bv^{(j+1)}\gets\bv^{(j)};\quad \bw^{(j+1)}\gets\bw^{(j)}$\tcp*[r]{reject step}
      $\rho^{(j+1)} \gets c\,\rho^{(j)}$ \tcp*[r]{update penalty}
    }
    $j \gets j+1$\;
    
  }
\end{algorithm}

Unlike traditional ADMM methods that use a fixed penalty parameter, we adopt the funnel-based strategy from \cite{kiessling2024unified} to adaptively increase the penalty when constraint infeasibility is high. Let the tolerance threshold be $\beta \tau^{(j)}$, with $\tau^{(0)} = \max(1, \|\bw^{(0)} - \bv^{(0)}\|^2_2)\gamma$ and some constant $\gamma > 1$. If the computed step is such that  $\|\bw^{(j+1)} - \bv^{(j+1)}\|^2_2 \le \beta \tau^{(j)}$, the step is accepted and the threshold is tightened via $\tau^{(j+1)} \gets (1 - \zeta)\tau^{(j)} + \zeta (\|\bw^{(j+1)} - \bv^{(j+1)}\|^2_2)$, for $0 < \zeta < 1$. Otherwise, the step is rejected and the penalty parameter $\rho$ is increased by a factor $c>1$.


\section{Numerical Results}
We now demonstrate the performance of the ADMM-based method on a model optimal control problem. The domain is the unit square $\Omega = [0,1]^2$, discretized using a uniform triangular mesh with mesh size $h = 1/32$ ($2048$ elements) and f is constant in the domain. We consider an elliptic PDE constraint with homogeneous Dirichlet boundary conditions imposed on the west-north boundary $(0, y)$ and $(x, 1)$, and homogeneous Neumann conditions on the other boundaries. The control variables $w(x), v(x)$ are defined piecewise constant on each triangular element over $\Omega$. Figure \ref{fig:controls} and \ref{fig:convergence_plots} represent the results.

\begin{figure}
    \centering
    \includegraphics[width=\linewidth]{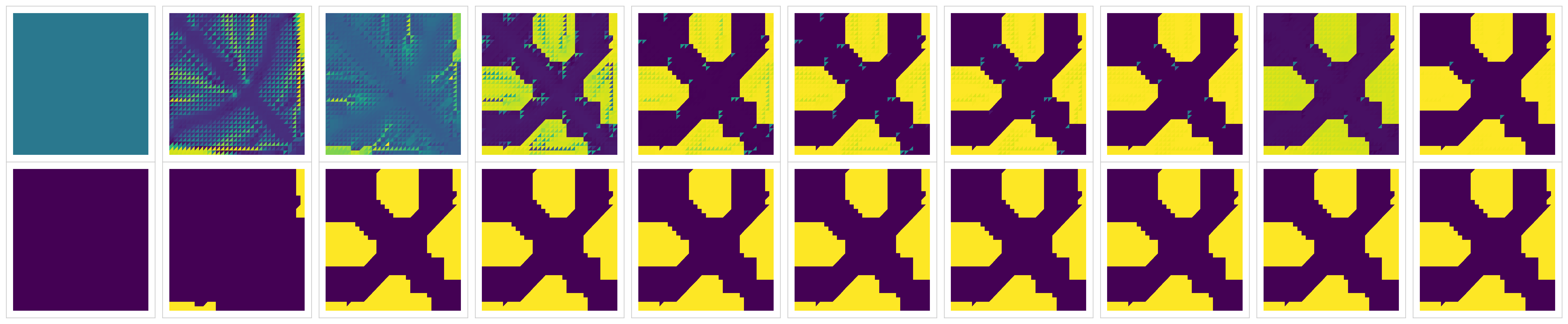}
    \caption{Convergence of $v$ values ({top}) and $w$ values ({bottom}) over iterations (left to right) for $\alpha = 5 \cdot 10^{-5}$, $\rho = 10^{-2}$, $\gamma=2$.}
    \label{fig:controls}
\end{figure}

\begin{figure}
    \centering
    \includegraphics[width=\linewidth]{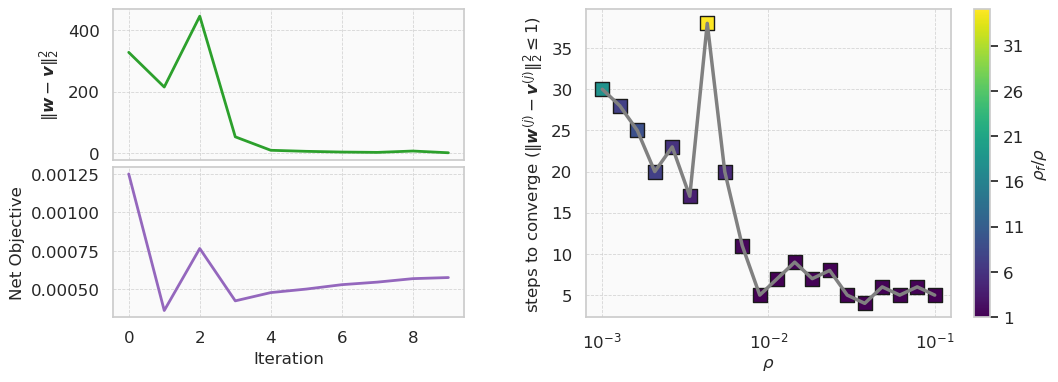}
    \caption{\textbf{Left}: plots show \textbf{Top}: $\|\bw-\bv\|^2_2$ convergence, \textbf{Bottom}: Convergence of the original Objective in \eqref{eq:original_prob}.
     \textbf{Right}: Plot showing the number of iterations taken by ADMM to converge $\|\bw - \bv\|^2_2 \leq 1$ for different initial $\rho$, the colorbar represents the ratio of the final to the initial penalty. }
    \label{fig:convergence_plots}
\end{figure}

{
\setlength{\bibsep}{0pt plus 0.3ex}
\bibliography{admm}}

\section{Appendix}\label{s:app}
Here, we will derive asymptotic convergence bounds for the algorithm described in \ref{alg:admm_TO}.
\subsection{Convergence Results}

Consider the Augmented Lagrangian \ref{eq:aug_lagr}:

\begin{align*}
    \Lc_{\rho}(\bu, \bv, \bw, \blam) = \bfu^T \bu
 + \alpha \sum\limits_{e \in E} \sum\limits_{e' \in \Nc(e)} |w_e - w_{e'}| \cdot s_{ee'}
 + \blam(\bw - \bv) + \frac{\rho}{2} \| \bw - \bv \|^2_2
\end{align*}

Let us introduce a shorthand notation:
\begin{itemize}
    \item $\phi(\bv) = \bfu^T \bu$
    \item $\psi(\bw) = \alpha \sum\limits_{e \in E} \sum\limits_{e' \in \Nc(e)} |w_e - w_{e'}| \cdot s_{ee'}$
    \item We will use a shorthand notation for the Augmented Lagrangian as $\Lc(\textbf{x}^{(j)})$ where $x^{(j)} \in \{\bv^{(j)}, \bw^{(j)}, \blam^{(j)}\}$ depending on which variable is being updated
\end{itemize}
\begin{align*}
    \Lc(\bv, \bw, \blam) = \phi(\bv) + \psi(\bw) + \blam(\bw-\bv) + \frac{\rho}{2}\|\bw - \bv\|^2_2
\end{align*}

\begin{assumption}
   \textbf{(Block Lipschitz continuity)} Assume that the Augmented Lagrangian $\Lc(\bv;\overline{\bw},\overline{\blam})$ is a continuously differentiable function over $\RR^{|E|}$ and is block-Lipschitz in $\bv$ for $L_{v}>0$
   \begin{align*}
       \|\Lc(\bv^{(j+1)}) - \Lc(\bv^{(j)})\| \leq L_v\|\bv^{(j+1)} - \bv^{(j)}\|
   \end{align*}
\end{assumption}

\begin{theorem}\label{thm:conv_iters}
    Consider the sequences $\{\tau^{(j)}\}$, $\{\|w^{(j)} - v^{(j)}\|^2_2\}$ and $\Lc^{(j)}$ generated by Algorithm \ref{alg:admm_TO} where $\{\Lc^{(j)}\}$ is bounded below and $\|w^{(j+n)} - v^{(j+n)}\|^2_2\leq \beta \tau^{(j)}$ for all $j,n \in \NN$. Furthermore, let $\beta, \zeta \in (0,1)$ be the constants defined in Algorithm \ref{alg:admm_TO}. In that case, it holds either:
    \begin{itemize}
        \item The penalty parameter is bounded above:$\rho^{(j)}\leq\overline{\rho}\leq +\infty$
        \item Penalty parameter is unbounded:
        $\rho^{(j)} \rightarrow+\infty$
    \end{itemize}
    In both cases it follows that $\|\bw^{(j)} - \bv^{(j)}\|^2_2 \rightarrow 0$ as $j\rightarrow+\infty$
\end{theorem}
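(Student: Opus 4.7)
The strategy is a case analysis on the dichotomy of $\{\rho^{(j)}\}$, preceded by a one-step contraction estimate for the funnel $\tau^{(j)}$. I would first observe that $\{\tau^{(j)}\}$ is monotone non-increasing: on a rejected step it is unchanged, while on an accepted step the update rule combined with the acceptance test $\|\bw^{(j+1)}-\bv^{(j+1)}\|_2^2\le\beta\tau^{(j)}$ yields
\[
\tau^{(j+1)} \;\le\; (1-\zeta)\tau^{(j)} + \zeta\beta\tau^{(j)} \;=\; \bigl(1-\zeta(1-\beta)\bigr)\tau^{(j)},
\]
with strict contraction factor $\eta:=1-\zeta(1-\beta)\in(0,1)$ since $\beta,\zeta\in(0,1)$. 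This one-step inequality is the engine of the whole argument.

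In Case~1, $\rho^{(j)}\le\overline{\rho}<\infty$. Because each rejection multiplies $\rho$ by $c>1$, only finitely many rejections can ever occur; let $j_0$ be the last of them. Past $j_0$ every step is accepted, so iterating the contraction gives $\tau^{(j)}\le\eta^{\,j-j_0}\tau^{(j_0)}\to 0$. Combined with the standing hypothesis $\|\bw^{(j+n)}-\bv^{(j+n)}\|_2^2\le\beta\tau^{(j)}$, this immediately delivers $\|\bw^{(j)}-\bv^{(j)}\|_2^2\to 0$.

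In Case~2, $\rho^{(j)}\to+\infty$, and I would argue by a sub-dichotomy on $\tau^{(j)}$. Either $\tau^{(j)}\to 0$, in which case the conclusion follows verbatim from the standing hypothesis as in Case~1, or $\tau^{(j)}\downarrow\tau^\star>0$. By the Step~1 contraction, the latter scenario can persist only if accepts cease after some index $j_1$, after which every step is rejected, the primal iterates freeze at some $(\overline{\bv},\overline{\bw})$, and $\tau^{(j)}\equiv\tau^\star$ for $j\ge j_1$. Rejection at each $j>j_1$ means $\|\overline{\bw}-\overline{\bv}\|_2^2>\beta\tau^\star$, yet the standing hypothesis applied at $j=j_1$ and $n$ large gives the reverse $\|\overline{\bw}-\overline{\bv}\|_2^2\le\beta\tau^{(j_1)}=\beta\tau^\star$. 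This contradiction rules out the stagnation sub-case, forces $\tau^{(j)}\to 0$, and closes Case~2.

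The main obstacle is Case~2, where the asymmetry of the algorithm (only accepts contract $\tau$; only rejects inflate $\rho$) must be reconciled with the standing hypothesis. The sketch above uses the hypothesis itself to exclude a frozen-rejection tail, which is the cleanest route. As a more quantitative backup I would develop the following: the Block-Lipschitz assumption bounds the decrease in $\Lc$ produced by the $\bv$-subproblem, so summing the accept-step descents from the lower-boundedness of $\{\Lc^{(j)}\}$ yields square-summability of $\|\bw^{(j)}-\bv^{(j)}\|_2^2$, ensuring convergence of the constraint violation to zero even when $\rho^{(j)}\to+\infty$.
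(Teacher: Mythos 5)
Your Case~1 is essentially the paper's case~(i): finitely many penalty increases, then the funnel contracts geometrically via $\tau^{(j+1)}\le(1-\zeta(1-\beta))\tau^{(j)}$ and the standing hypothesis transfers $\tau^{(j)}\to 0$ to the constraint violation. (Your algebra is actually the corrected version of the paper's, which mistakenly writes the contraction factor as $-\zeta(1-\beta)$ and then $\theta=\zeta(1-\beta)$; the factor should be $1-\zeta(1-\beta)\in(0,1)$ as you have it.)

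Case~2 is where you genuinely diverge, and where I see a gap. The paper's case~(ii) is a quantitative penalty argument about the \emph{subproblem solutions}: a Taylor expansion of $F_{\bw}$ with a Hessian lower bound shows that any minimizer of the continuous subproblem satisfies $\|\bv^*-\bw\|\le\|\nabla F_{\bw}(\bw)\|/(\rho/2+c)$, and finiteness of the binary feasible set $\mathcal{F}$ shows that for $\rho$ large the discrete subproblem must return the $\bw^*$ minimizing $\|\bv^*-\bw\|^2$; together these force $\|\bw^*-\bv^*\|\to 0$ as $\rho\to+\infty$. Your argument instead rules out a frozen-rejection tail by contradiction with the standing hypothesis $\|\bw^{(j+n)}-\bv^{(j+n)}\|_2^2\le\beta\tau^{(j)}$. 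The difficulty is that the rejection test is evaluated on the freshly computed candidates $(\bv^*,\bw^*)$ (which change at every rejected iteration because $\rho$ changes), whereas the standing hypothesis constrains the \emph{accepted} iterates, which in a rejection tail are frozen at $(\overline{\bv},\overline{\bw})$. Your claimed contradiction identifies these two objects: ``rejection means $\|\overline{\bw}-\overline{\bv}\|_2^2>\beta\tau^\star$'' is only true under a literal reading of the pseudocode in which the test is applied to the already-frozen iterates --- but under that reading the standing hypothesis forces every step to be accepted and $\rho^{(j)}\to+\infty$ can never occur, so Case~2 is vacuous and nothing is proved about it. Under the only reading in which Case~2 is non-vacuous, your contradiction does not go through, and you never establish the substantive fact the paper is after, namely that increasing $\rho$ drives the candidate gap $\|\bw^*-\bv^*\|$ to zero. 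Your proposed backup (summing accept-step descents of $\Lc$ against its lower bound to get square-summability of the violation) is closer in spirit to standard ADMM analyses than to the paper's argument, but it is only asserted, not carried out, and it would need the descent property of the inexact discrete subproblem solver, which neither you nor the paper verifies.
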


\begin{proof}
    We study the two cases, depending on whether or not there is an infinite number of penalty updates. $\rho^{(j+1)}\leftarrow c\rho^{(j)}$ where $c>1$ 

    i) If the number of penalty updates are finite, the funnel update rule implies: 
    \begin{align*}
        \tau^{(j+1)} &= (1-\zeta)\tau^{(j)} + \zeta\|\bw^{(j+1)} - \bv^{(j+1)}\|^2_2\\
        &\leq (1-\zeta)\tau^{(j)} + \zeta \beta \tau^{(j)}\\
        &\leq -\zeta (1-\beta)\tau^{(j)}
    \end{align*}
    Therefore $\tau^{(j+1)}\leq \theta \tau^{(j)}$ where $\theta\overset{def}{=}\zeta(1-\beta)$, thus $\tau^{(j)}\rightarrow 0$ for $j\rightarrow +\infty$.
\vspace{0.5cm}

    ii) Consider that for some $j>J$, let $\overline{\bv} = \bv^{(j)}, \overline{\bw}= \bw^{(j)}$ and $\overline{\blam} = \blam^{(j)}$ be the iterates generated by Algorithm \ref{alg:admm_TO} and let $\rho=\rho^{(j)}$ be the penalty parameter.
The Augmented Lagrangian in \eqref{eq:aug_lagr}:
\begin{equation}\label{eq:lagr_param}
\begin{aligned}
    \Lc(\bv, \bw,\blam) = \bfu^T \bu
 + \alpha \sum\limits_{e \in E} \sum\limits_{e' \in \Nc(e)} |w_e - w_{e'}| \cdot s_{ee'} + \blam(\bw - \bv) +\frac{\rho}{2}\|\bw-\bv\|^2_2
\end{aligned}
\end{equation}
Rewriting the above equation as
\begin{equation}\label{eq:lagr_red}
\begin{aligned}
    \Lc(\bv, \bw,\blam) = F_\bw(\bv)+ \frac{\rho}{2}\|\bw-\bv\|^2_2
\end{aligned}
\end{equation}
where
\begin{align*}
    F_\bw(\bv) \equiv \bfu^T \bu
 + \alpha \sum\limits_{e \in E} \sum\limits_{e' \in \Nc(e)} |w_e - w_{e'}| \cdot s_{ee'} + \blam(\bw-\bv)
\end{align*}

\begin{claim}
    For any $\epsilon > 0$, there exists $\bar{\rho}$ such that if $\rho \ge \bar{\rho}$, then any optimal solution of \eqref{eq:subprob1} satisfies $\|\bv^* - \bw\| < \epsilon$.
\end{claim}

 We now have two terms where the nonlinear term is bounded from above and the quadratic is possibly unbounded as $\rho\rightarrow+\infty$ 

Doing a Taylor expansion of $F_{\bw}(\bv)$ about ${\overline{\bw}}$ in \eqref{eq:lagr_red} we get
\begin{equation}\label{eq:lagr_taylr}
\begin{aligned}
    \Lc({\bv}, \overline{\bw}, \blam) &= F_{\bw}(\overline{\bw}) + \nabla F_\bw(\overline{\bw})^{T}(\bv-\overline{\bw}) + (\bv-\overline{\bw})^T \nabla^2F_{\bw}(\overline{\bw})(\bv-\overline{\bw}) +\frac{\rho}{2}\|\bv-\overline{\bw}\|^2_2
\end{aligned}
\end{equation}
where the terms $\nabla F_{\bw}$ and $\nabla^2F_{\bw}(\overline{\bw})$ are gradient and Hessian w.r.t. the continuous variable.

Let us assume that $\nabla^2F_{\bw} \geq m\textbf{I}$, where $|m| <+\infty$ we can write the above equation as:
\begin{align*}
    \Lc(\bv, \overline{\bw}, \blam) &\geq F_{\bw}(\overline{\bw}) + \nabla F_\bw(\overline{\bw})^{T}(\bv-\overline{\bw}) + m\|\bv-\overline{\bw}\|^2_2 +\frac{\rho}{2}\|\bv-\overline{\bw}\|^2_2
\end{align*}
Using the Cauchy-Schwarz inequality in the above expression,
\begin{equation}\label{eq:lagr_fin_v}
\begin{aligned}
\Lc(\bv, \overline{\bw}, \blam)&\geq F_{\bw}(\overline{\bw}) - \|\nabla F_\bw(\overline{\bw})^{T}\|\|\bv-\overline{\bw}\| + m\|\bv-\overline{\bw}\|^2_2 +\frac{\rho}{2}\|\bv-\overline{\bw}\|^2_2
\end{aligned}
\end{equation}
Now consider the expression in \eqref{eq:lagr_taylr} evaluated at ${\bw}$
\begin{equation}\label{eq:lagr_fin_w}
\begin{aligned}
    \Lc(\bw, \bw, \blam) = F_{\bw}(\overline{\bw})
\end{aligned}
\end{equation}
Assume $\bv^* = \underset{\bv}{\arg \min} \Lc(\bv, \overline{\bw},\overline{\blam})$, subsituting and subtracting \eqref{eq:lagr_fin_v} and \eqref{eq:lagr_fin_w} gives

\begin{equation}\label{eq:lagr_quad}
\begin{aligned}
    \Lc(\bv^{*}, \bw, \blam) - \Lc(\bw, \overline{\bw}, \blam)&\geq - \|\nabla F_\bw(\overline{\bw})^{T}\|\|\bv^*-\overline{\bw}\| + m\|\bv^*-\overline{\bw}\|^2_2 +\frac{\rho}{2}\|\bv^*-\overline{\bw}\|^2_2
\end{aligned}
\end{equation}
Consider the set of points $\bw$ such that
\[
\mathcal{B}_\varepsilon(\bw) := \left\{\, \bw \in \{0,1\}^{|E|} : 0\leq\|\bv^* - {\bw}\| \leq \frac{\|\nabla F_{\bw}({\bw})\|}{(\frac{\rho}{2}+c)} \,\right\}.
\] 
The quadratic term on the rhs of \eqref{eq:lagr_quad} is non-positive within the ball $\mathcal{B}_{\varepsilon}$ of radius $\varepsilon$. 


Now, outside the ball $\mathcal{B_{\varepsilon}}$ the quadratic is positive and strict inequality holds.
\begin{align*}
    \Lc(\bv^*, \bw, \blam) > \Lc(\bw, \bw, \blam)
\end{align*}
 The solution set for this is some $\hat{\mathcal{B}} \subseteq\mathcal{B}_{\varepsilon}.$ If $\rho>\overline{\rho}$, $\|\bv^{*} - \bw\|\leq\frac{\|\nabla F_\bw(\bw)\|}{(\frac{\rho}{2}+c)}$\\

\begin{claim}
    There exists $\bar{\rho} > 0$ such that if $\rho \ge \bar{\rho}$, then any solution of \eqref{eq:subproblem2}, $\bw^*$, minimizes $\|\bv^* - \bw\|_2^2$ over $\mathcal{F}$.
\end{claim}

Argument: Find $\bar{\rho}$ such that this claim holds. We need to show for $\rho$ large enough, that any point $\bw$ which does not minimize $\|\bv^* - \bw\|_2^2$ is not a solution of \eqref{eq:subproblem2}, i.e., $\Lc(\bv^{*}, \bw^{*}, \blam) \le \Lc(\bv^{*}, \bw, \blam)$ where $w^*$ does minimize $\|\bv^* - \bw\|^2$.

Now we analyze the solution for the second subproblem for large $\rho$. The subproblem \eqref{eq:subproblem2}, for each iteration, finds a ${\bw^{*}}$ in the feasible region defined by the constraints such that:
\begin{align*}
    \Lc(\bv^{*}, \bw^{*}, \blam)& - \Lc(\bv^{*}, \bw, \blam) \leq0\\
\bw^{*} \;\in\; \mathcal{F} 
\;\triangleq\;&
\left\{ \bw \in \{0,1\}^{|E|} \;\middle|\;
\sum_{e \in E} w_e \;\leq\; V_{\max}\cdot |E|
\right\}.
\end{align*}
Where:
\begin{align*}
    \Lc(\bv, \bw,\blam) = \bfu^T \bu
 + \alpha \sum\limits_{e \in E} \sum\limits_{e' \in \Nc(e)} |w_e - w_{e'}| \cdot s_{ee'} + \blam(\bw - \bv) +\frac{\rho}{2}\|\bw-\bv\|^2_2
\end{align*}
and let
\begin{align*}
    G(\bw) \equiv \bfu^T \bu
 + \alpha \sum\limits_{e \in E} \sum\limits_{e' \in \Nc(e)} |w_e - w_{e'}| \cdot s_{ee'} + \blam(\bw-\bv)
\end{align*}
Writing out the Augmented Lagrangian at initial $\bw$ and some final $\bw^{*}$ 

\begin{align*}
    G(\bw^{*}) + \frac{\rho}{2}\|\bw^{*} - \bv^{*}\|^2_2 - \left(G(\bw) +\frac{\rho}{2}\|\bw-\bv^{*}\|^2_2\right) \leq 0 \\
    G(\bw^*) - G(\bw) + \frac{\rho}{2}\left(\|\bw^* - \bv^*\|^2 - \|\bw - \bv\|^2 \right) \le 0
\end{align*}

where 
\[
G(\bw) = \alpha \sum\limits_{e \in E} \sum\limits_{e' \in \Nc(e)} |w_e - w_{e'}| \cdot s_{ee'} + \rho\blam(\bw-\bv^{*})
\]

Because $\mathcal{F}$ is finite, $G(\bw^*) - G(\bw)$ is upper bounded by some $\bar{G}$.
\[
    \frac{\rho}{2}\left(\|\bw^* - \bv^*\|^2 - \|\bw - \bv^*\|^2 \right)  \le -\bar{G}
\]
Note $\|\bw - \bv^*\|^2$ is strictly larger than $\|\bw^* - \bv^*\|^2$, $\|\bw - \bv^*\|^2 - \|\bw^* - \bv^*\|^2 \ge m > 0$

\[
    \rho \ge -2\frac{\bar{G}}{\|\bw^* - \bv^*\|^2 - \|\bw - \bv^*\|^2} \ge 2\frac{\bar{G}}{m}
\]

both of the cases above lead to the conclusion: $\|\bv^{(j)} - \bw^{(j)}\|^2_2\rightarrow 0$ as $\rho\rightarrow +\infty$
\end{proof}

\end{document}